%-----------------------------------------------------------------------
% Beginning of tran-l-template.tex
%-----------------------------------------------------------------------
%
%     This is a topmatter template file for TRAN for use with AMS-LaTeX.
%
%     Templates for various common text, math and figure elements are
%     given following the \end{document} line.
%
%%%%%%%%%%%%%%%%%%%%%%%%%%%%%%%%%%%%%%%%%%%%%%%%%%%%%%%%%%%%%%%%%%%%%%%%

%     Remove any commented or uncommented macros you do not use.

\documentclass{tran-l}

%     If you need symbols beyond the basic set, uncomment this command.
%\usepackage{amssymb}

%     If your article includes graphics, uncomment this command.
%\usepackage{graphicx}

%     If the article includes commutative diagrams, ...
%\usepackage[cmtip,all]{xy}

%     Update the information and uncomment if AMS is not the copyright
%     holder.
%\copyrightinfo{2009}{American Mathematical Society}

\usepackage{bbm} 
\usepackage{xcolor}

\newtheorem{theorem}{Theorem}[section]

\theoremstyle{definition}

\newtheorem{example}[theorem]{Example}

\DeclareMathOperator{\sinc}{sinc}
\DeclareMathOperator{\Res}{Res}

\theoremstyle{remark}
\newtheorem{remark}[theorem]{Remark}

\numberwithin{equation}{section}

\begin{document}

% \title[short text for running head]{full title}

\title[An approach to Borwein integrals from residue theory]{An approach to Borwein integrals \\ from the point of view of residue theory}
%    Only \author and \address are required; other information is
%    optional.  Remove any unused author tags.
%    author one information
% \author[short version for running head]{name for top of paper}
\author[D. Cao Labora]{Daniel Cao Labora$^1$} \footnote{ \tiny Daniel Cao Labora: Departament of Statistics, Mathematical Analysis and Optimization, Faculty of Mathematics and CITMAga, Universidade de Santiago de Compostela (USC), Galicia, Spain; daniel.cao@usc.es. ORCID: 0000-0003-2266-2075}
\address{}
%\curraddr{}
\thanks{}

%    author two information
\author[G. Cao Labora]{Gonzalo Cao Labora$^2$}\footnote{ \tiny Gonzalo Cao Labora: Department of Mathematics, Massachusetts Institute of Technology (MIT), MA, United States of America; gcaol@mit.edu. ORCID: 0000-0002-8426-8391}
%\curraddr{}
%\thanks{}

%    \subjclass is required.
\subjclass[2010]{30E20, 26A42}

\keywords{Borwein integrals, residue theory, Complex Analysis, sinc function}

\date{May, 2024}

%\dedicatory{To my grandparents, Julio and Ricardo}

%    Abstract is required.
\begin{abstract}
Borwein integrals are one of the most popularly known phenomena in contemporary mathematics. They were found in 2001 by David Borwein and Jonathan Borwein and consist of a simple family of integrals involving the cardinal sine function ``sinc'', so that the first integrals are equal to $\pi$ until, suddenly, that pattern breaks. The classical explanation for this fact involves Fourier Analysis techniques. In this paper, we show that it is possible to derive an explanation for this result by means of undergraduate Complex Analysis tools; namely, residue theory. Besides, we show that this Complex Analysis scope allows to go a beyond the classical result when studying these kind of integrals. Concretely, we show a new generalization for the classical Borwein result.
\end{abstract}

\maketitle

%    Text of article.
\section{Introduction}

The integrals given by Borwein and Borwein in \cite{Borwein2001} involves the ``sinc'' function, that is defined as \[\textnormal{sinc}(x)=\frac{\sin x}{x},\] where one takes the definition $\textnormal{sinc}(0)=1$ in order to make the ``sinc'' function smooth on the whole real line $\mathbb{R}$. The surprising phenomenon involving the integration of products of rescaled versions of ``sinc'' is the following one. It can be seen that\footnote[3]{$I_1$ is not convergent in the Lebesgue sense but it is equal to $\pi$ in the Riemann sense. From $I_2$ onwards, all integrals can be interpreted either in the Lebesgue or Riemann sense.}
\begin{align*}
I_{1}:=&\int_{-\infty}^{\infty} \sinc(x) \, dx = \pi,\\
I_{2}:=&\int_{-\infty}^{\infty} \sinc(x) \sinc(x/3) \, dx = \pi,\\
I_{3}:=&\int_{-\infty}^{\infty} \sinc(x) \sinc(x/3) \sinc(x/5) \, dx = \pi, \\
I_{4}:=&\int_{-\infty}^{\infty} \sinc(x) \sinc(x/3) \sinc(x/5) \sinc(x/7) \, dx = \pi. \\
\end{align*}
so one would expect to have the property \[I_{n}:=\int_{-\infty}^{\infty} \prod_{j=1}^n \sinc\left(\frac{x}{2j-1}\right) dx = \pi\] for any number $n \in \mathbb{Z}^+$. Nevertheless, this reasonable conjecture fails for $n\geq 8$. 

The main objective of this paper is to prove the validity of the identity for $n \leq 7$ and to give an explanation for the failure whenever $n \geq 8$, by means of a direct use of Complex Analysis. The classical explanation for this phenomenon \cite{Borwein2001} involves Fourier Analysis, but, to the best of our knowledge, no full and intuitive explanation has been provided from the point of view of the theory of Complex Analysis. In Theorem \ref{th:three_dominant} we also provide a new result concerning Borwein integrals in the case where the first three frequencies are dominant (and not just the first one, as in the classical result). We are not aware of any proof of Theorem \ref{th:three_dominant} from the Fourier Analysis perspective, so this is also a demonstration of the power of the complex analytic approach.

Let us mention some of the work that has been done for Borwein integrals following \cite{Borwein2001}. In \cite{Borwein2002}, the authors consider similar ``sinc'' integrals in $\mathbb R^n$ and, using similar Fourier Analysis techniques, show a similar pattern in the multidimensional setting. There has also been work showing that this pattern persists when the integrals are replaced with summations \cite{Baillie2008}. In \cite[Theorem 1]{Bradley2002}, the authors arrive to the intermediate formula \eqref{eq:In2} (in an equivalent formulation) using principal value integrals, which is somewhat similar to the complex analytic one. However, the explicit formula $I_n = \pi$ for $n \leq 7$ is not obtained. Finally, let us mention that Borwein integrals have applications in the computation of the volume of intersection of hypercubes with Euclidean half-spaces \cite{Marichal2008} and in bounding the quantity of the integer solutions to linear equations \cite{Aaronson2019}.

We organize the paper as follows. In Section 2, we derive the classical result $I_n = \pi$ for $n\leq 7$ and provide an explanation for $I_n < \pi$ for $n \geq 8$, using basic techniques from Complex Analysis. In Section 3 we use the complex analytic approach to derive two generalizations. The first one, Theorem \ref{th:generic_freq}, is the case of arbitrary frequencies. This was already known from \cite{Borwein2001}, but we provide the first proof from Complex Analysis. A further generalization, which is new, is given in Theorem \ref{th:three_dominant} and corresponds to the case where the first frequency is not dominant, but the first three frequencies are dominant.

For the rest of this paper, we will deal with the natural extension of the ``sinc'' function to $\mathbb{C}$. This entire function is written in terms of the exponential function as \[\textnormal{sinc}(z)=\frac{e^{iz}-e^{-iz}}{2iz},\] where we have simply used the well known expression for the complex sine function.

\section{A Complex Analysis explanation for the original result}

In this part of the manuscript we will provide a simple explanation for the phenomenon involving Borwein integrals that was described in the previous section. We will only require a basic knowledge of usual tools from an undergraduate course in Complex Variable; namely, elementary results involving residue theory. 

\subsection{General Strategy from Complex Analysis}

For the rest of the paper, $g(z)$ will denote an entire function on the complex plane whose restriction to the real line is integrable. The function $g(z)$ will have the special property that it can be split as the sum of two holomorphic functions on $\mathbb{C} \setminus \{0\}$, namely $g(z) = g_1(z) + g_2(z)$, each of them having a unique pole of order $n$ that will be located at $z=0$. Besides, the integral of $g_1(z)$ along the semicircumference of center $z=0$ and radius $R$ on the upper half plane will tend to zero when $R \to \infty$. The same will happen for $g_2(z)$ on the lower half plane. In the first subsection, we will show how the residue of $g_1(z)$ at $0$ denoted by $\Res(g_1,0)$ is enough to determine the value of $\int_{-\infty}^{\infty} g(x) \, dx.$

In the second subsection, we will consider the particular case where \[g(z)= \prod_{j=1}^n \sinc \left(\frac{z}{2j-1}\right).\] Let us argue that this $g(z)$ admits the $g_1(z) + g_2(z)$ decomposition described above. The complex expression for ``sinc'' and the expansion of the product of the $n$ factors will produce $2^n$ summands of the form \[ \pm \frac{(2n-1)!!}{(2i)^n} \, \frac{ e^{\lambda i z}}{z^n},\] for some values of $\lambda$ and some choices of $+$ or $-$ in $\pm$. The function $g_1$ will be the sum of the terms where $\lambda>0$, whether $g_2$ will be the sum of the terms where $\lambda<0$. Hence, the second subsection will be devoted to the computation of the quantity $\Res(g_1,0)$, that will be expressed as the sum of $2^{n-1}$ elements, since half of the $2^n$ terms have $\lambda > 0$. %Of course, the already mentioned choices for $g_1, g_2$, and $g$ are under the required hypotheses of the arguments developed in the first subsection: $g$ is entire and integrable (if $n\geq 2$), $g_1$ and $g_2$ have a unique pole (located at $z=0$ with order $n$), and their respective integrals on the corresponding half plane have the desired decay property.

In the third subsection, after a combinatorial argument, we will simplify the sum of $2^{n-1}$ terms when $n \leq 7$ and we will prove that, in such a case, $\int_{-\infty}^{\infty} g(x) \, dx = \pi.$ Finally, in the fourth subsection, we will provide an expression for the difference between $\int_{-\infty}^{\infty} g(x) \, dx$ and $\pi$ when $n = 8$.

\subsection{Calculation of Borwein integrals in terms of residues} \label{Calculation}

First, we show how does $\Res(g_1,0)$ determine the value of the integral of $g$ along the real line. To do this, we will consider the following paths, where $a \in \mathbb{R}^+$ and $t \in [0,\pi]$: \[\mu_{\uparrow,a}(t)=a e^{it}, \quad \mu_{\downarrow,a}(t)=-a e^{it}.\] Thus, both families of paths consist on positively oriented semicircumferences. The reason for such a notation is that for $\mu_{\uparrow,a}$ the arc goes from $a$ to $-a$ by the upper half plane, and for $\mu_{\downarrow,a}$ the arc goes from $-a$ to $a$ by the lower half plane. Using some oriented segments, and the previous oriented arcs, we define the following closed contours for latter integration along them
\begin{align*}
\gamma_{\uparrow, R, \varepsilon}&:=[-R,-\varepsilon] \cup \mu_{\downarrow,\varepsilon} \cup [\varepsilon, R] \cup \mu_{\uparrow, R},\\
\gamma_{\downarrow, R, \varepsilon}&:=[-R,-\varepsilon] \cup \mu_{\downarrow,\varepsilon} \cup [\varepsilon, R] \cup \mu_{\downarrow, R}.
\end{align*}
Besides, the decay properties on $g_1$ and $g_2$ suggest us to consider the following integrals along the previous contours
\begin{align*}
\int_{\gamma_{\uparrow, R, \varepsilon}} g_1(z) \, dz =\int_{[-R,-\varepsilon]} g_1(z) \, dz  +\int_{\mu_{\downarrow,\varepsilon}} g_1(z) \, dz  + \int_{[\varepsilon, R]} g_1(z) \, dz  + \int_{\mu_{\uparrow, R}} g_1(z) \, dz \\
\int_{\gamma_{\downarrow, R, \varepsilon}} g_2(z) \, dz =\int_{[-R,-\varepsilon]} g_2(z) \, dz  +\int_{\mu_{\downarrow,\varepsilon}} g_2(z) \, dz  + \int_{[\varepsilon, R]} g_2(z) \, dz  + \int_{\mu_{\downarrow, R}} g_2(z) \, dz
\end{align*}
Thanks to residue theory, taking into account the indexes of $\gamma_{\uparrow, R, \varepsilon}$ and $\gamma_{\downarrow, R, \varepsilon}$ with respect to the unique pole at $0$, we can write the exact value of the integrals along the closed contour
\begin{align*}
\int_{[-R,-\varepsilon]} g_1(z) \, dz  +\int_{\mu_{\downarrow,\varepsilon}} g_1(z) \, dz  + \int_{[\varepsilon, R]} g_1(z) \, dz  + \int_{\mu_{\uparrow, R}} g_1(z) \, dz &= 2 \pi i \Res(g_1,0), \\
\int_{[-R,-\varepsilon]} g_2(z) \, dz  +\int_{\mu_{\downarrow,\varepsilon}} g_2(z) \, dz  + \int_{[\varepsilon, R]} g_2(z) \, dz  + \int_{\mu_{\uparrow, R}} g_2(z) \, dz &= 0.
\end{align*}
Now, the addition of the two previous expressions together with the the linearity of the integral gives us
\[\int_{[-R,-\varepsilon]} g(z) \, dz  +\int_{\mu_{\downarrow,\varepsilon}} g(z) \, dz  + \int_{[\varepsilon, R]} g(z) \, dz  + \int_{\mu_{\uparrow, R}} g_1(z) \, dz + \int_{\mu_{\downarrow, R}} g_2(z) \, dz = 2 \pi i \Res(g_1,0).\]
Finally, we consider the double limit when $\varepsilon \to 0$ and $R \to \infty$ and analyse what happens to each addend.

First, by definition and since $g$ is integrable, \[\lim_{\substack{R \to \infty \\ \varepsilon \to 0}} \left( \int_{[-R,-\varepsilon]} g(z) \, dz + \int_{[\varepsilon, R]} g(z) \, dz  \right)= \int_{-\infty}^{\infty} g(z) \, dz.\]

Second, since $g$ is entire and the length of $\mu_{\downarrow,\varepsilon}$ tends to zero when $\varepsilon \to 0$, \[\lim_{\varepsilon \to 0} \int_{\mu_{\downarrow,\varepsilon}} g(z) \, dz  = 0.\]

Third, the decay of the integrals of $g_1$ and $g_2$ on the corresponding half plane guarantees \[\lim_{R \to \infty}\int_{\mu_{\uparrow, R}} g_1(z) \, dz= \lim_{R \to \infty} \int_{\mu_{\downarrow, R}} g_2(z) \, dz=0.\]

The combination of all this information provides the formula \[\int_{-\infty}^{\infty} g(z) \, dz = 2 \pi i \Res(g_1,0).\]

\subsection{Calculation of the residue} \label{Residue}

As we have sketched before, up to some minus signs, $g_1(z)$ is the sum of $2^{n-1}$ elements of the form \[\frac{(2n-1)!!}{(2i)^n} \, \frac{ e^{\lambda i z}}{z^n}\] for certain values of $\lambda$. Consequently, it will be convenient to calculate the value of the residue of the following pole at $z=0$: 
\begin{equation} \label{eq:Res}
\textnormal{Res}\left(\frac{(2n-1)!!}{(2i)^n} \, \frac{ e^{\lambda i z}}{z^n},0\right).
\end{equation}

Since the order of the pole is $n$, we can compute \[\textnormal{Res}\left(\frac{(2n-1)!!}{(2i)^n} \, \frac{ e^{\lambda i z}}{z^n},0\right)=\frac{1}{(n-1)!} \frac{(2n-1)!!}{(2i)^n} \, {\frac{d^{n-1}}{dz^{n-1}} e^{\lambda iz}}_{\big \vert z=0}.\]

Finally, immediate calculations show that \[ \textnormal{Res}\left(\frac{(2n-1)!!}{(2i)^n} \, \frac{ e^{\lambda i z}}{z^n},0\right) = \frac{1}{2i} \frac{(2n-1)!!}{(n-1)!} \left(\frac{\lambda}{2}\right)^{n-1}.\] 

\subsection{Explicit expression for the residue for $n \leq 7$} \label{Explicit}

According to the two previous subsections, if we consider \[\lambda_{\sigma}=\sigma_1 + \sigma_2 \cdot \frac{1}{3} + \sigma_{3} \cdot \frac{1}{5} + \dots + \sigma_{n} \cdot \frac{1}{2n-1},\] where $\sigma=(\sigma_1,\dots,\sigma_n) \in \{-1,1\}^n$, we have that 
\begin{equation} \label{eq:In1}
I_n= 2\pi i \hspace{-0.4 cm} \sum_{\substack{\lambda_{\sigma}>0 \\ \sigma \in \{-1,1\}^n}} \hspace{-0.4 cm} \Res\left(\frac{(2n-1)!!}{(2i)^n} \, \frac{ e^{\lambda_{\sigma} i z}}{z^n} \, \prod_{j=1}^n \sigma_j ,0\right)=\pi\hspace{-0.4 cm}\sum_{\substack{\lambda_{\sigma}>0 \\ \sigma \in \{-1,1\}^n}} \hspace{-0.4 cm} \frac{(2n-1)!!}{(n-1)!} \left(\frac{\lambda_{\sigma}}{2}\right)^{n-1} \prod_{j=1}^n \sigma_j ,
\end{equation}
for any $n \in \mathbb{Z}^+$. In this section, we will prove that the latter quantity is exactly $\pi$, whenever $n \leq 7$.

In order to do so, the key observation is that the condition $\sigma_1=1$ is equivalent to $\lambda_{\sigma}>0$ if $n \leq 7$, but not in general. The reason is that have
\begin{equation*}
\left \vert \sum_{j=2}^n \sigma_{j} \cdot \frac{1}{2j-1} \right\vert < \frac{1}{3} + \frac{1}{5} + \ldots + \frac{1}{2n-1} < 1,
\end{equation*}
given that $n \leq 7$. Therefore, the assumption $n \leq 7$ implies $\lambda_\sigma > 0$ whenever $\sigma_1 = 1$ and $\lambda_\sigma < 0$ whenever $\sigma_1 = -1$. %In order to simplify latter expressions, let us take the notation $\tilde \sigma \in \{-1,1\}^{n-1}$ for the vector of the last $n-1$ coordinates of $\sigma$, so that $\tilde \sigma_j = \sigma_{j+1}$ for $j=1,\dots,n-1$. 
Then, because of \eqref{eq:In1}, we can express $I_n$ as
%I_n &= \pi   \sum_{\substack{\tilde \sigma \in \{-1,1\}^{n-1}}}    \frac{(2n-1)!!}{(n-1)!} \left( \frac{\lambda_{(1, \tilde \sigma)}}{2}\right)^{n-1} \prod_{j=1}^{n-1} \tilde \sigma_j \notag \\
\begin{align} 
I_n= \frac{\pi}{2^{n-1}} \frac{(2n-1)!!}{(n-1)!}  \underbrace{ \sum_{\substack{ \sigma_1=1 \\ \sigma \in \{-1,1\}^{n}}  }
 \left( \sum_{j=1}^{n} \frac{\sigma_j}{2j-1} \right)^{n-1} \prod_{j=1}^{n} \sigma_j }_{\mathcal S} \label{eq:In2}
\end{align}

Now, we proceed to study the sum $\mathcal S$. Let us denote with the symbol $\mathcal P$ the family of $n$-tuples of nonnegative integers $(p_1, p_2, \ldots , p_n) \in \mathbb Z^{n}_{\geq 0}$ fulfilling the condition $p_1 + p_2 + \ldots + p_{n-1}+ p_n= n-1$. The multinomial formula allows us to express
\begin{equation} \label{eq:S}
\mathcal S =   \sum_{\substack{ \sigma_1=1 \\ \sigma \in \{-1,1\}^{n}}  }
 \left(\sum_{p \in \mathcal P} \binom{n-1}{p} \prod_{j=1}^{n} \left( \frac{\sigma_j}{2j-1} \right)^{p_j} \right) \prod_{j=1}^{n} \sigma_j
\end{equation}
where we recall the definition for multinomial coefficients
\begin{equation*}
\binom{n-1}{p} = \frac{(n-1)!}{p_1! \, p_2! \cdots  p_n!}.
\end{equation*}
If we group the $\sigma_{j}$ factors, and after taking into account that $\sigma_1=1$, we can rewrite \eqref{eq:S} as
\begin{equation} \label{eq:S2}
\mathcal S = \sum_{p \in \mathcal P} \left(\binom{n-1}{p} \left(\prod_{j=1}^{n} \left( \frac{1}{2j-1} \right)^{p_j} \right)  \left( \sum_{\substack{ \sigma_1=1 \\ \sigma \in \{-1,1\}^{n}}  } \prod_{j=2}^{n} \sigma_j^{p_j+1}\right)\right).
\end{equation}
For each fixed $p \in \mathcal{P}$, the sum over $(\sigma_2,\dots,\sigma_{n}) \in \{-1,1\}^{n-1}$ appearing in \eqref{eq:S2} can be factorised, yielding
\begin{equation} \label{eq:symmetrization_equality}
\sum_{\substack{ \sigma_1=1 \\ \sigma \in \{-1,1\}^{n}}  } \prod_{j=2}^{n} \sigma_j^{p_j+1} =
  \prod_{j=2}^{n} \left( \sum_{\sigma_j \in \{-1,1\}}   \sigma_j^{p_j+1} \right)
  = \prod_{j=2}^{n} 2 \cdot \mathbbm{1}_{p_j \text{ is odd}}.
\end{equation}
Since $p \in \mathcal P$, we have that $p_1 + p_2 + \ldots + p_{n} = n-1$, so the only possible situation where every $p_j$ is odd for $j \geq 2$ consists on the case where $p_j = 1$ for every $j \in \{2, 3, \ldots n \}$. Hence, this claim allows to simplify \eqref{eq:S2} by using \eqref{eq:symmetrization_equality}, obtaining
\begin{equation*}
\mathcal S = (n-1)!  \left( \prod_{j=1}^{n} \frac{1}{2j-1} \right) 2^{n-1} = \frac{ 2^{n-1} (n-1)!}{(2n-1)!!}
\end{equation*}
Finally, after plugging this value for $\mathcal{S}$ into \eqref{eq:In2} and making direct cancellations, we conclude 
\begin{equation*}
I_n =  \pi
\end{equation*}
whenever $n \leq 7$.
\subsection{The residue for $n \geq 8$}
The three previous subsections provide a proof for the claim $I_n= \pi$ for any $n \leq 7$. Indeed, if one checks the arguments that have been made, no special mention to the $n \leq 7$ case has been done in subsections \ref{Calculation} or \ref{Residue}. Nevertheless, as we stated in subsection \ref{Explicit}, the key fact is that the sign of the expression \[\sum_{j=1}^n \sigma_j \cdot \frac{1}{2j-1}\] is governed by the sign of $\sigma_1$ whenever $n \leq 7$. However, this is no longer true for $n \geq 8$. If we recall \eqref{eq:In1}, we have the following expression for $I_n$
\begin{equation*}\label{I8}
I_n=\pi \hspace{-0.4 cm}\sum_{\substack{\lambda_{\sigma}>0 \\ \sigma \in \{-1,1\}^n}} \hspace{-0.4 cm} \frac{(2n-1)!!}{(n-1)!} \left(\frac{\lambda_{\sigma}}{2}\right)^{n-1} \prod_{j=1}^n \sigma_j .
\end{equation*}
Besides, due to the previous subsection, we know that
\begin{equation}\label{pi8}
\pi=\pi \hspace{-0.4 cm}\sum_{\substack{\ \sigma_1=1 \\ \sigma \in \{-1,1\}^n}} \hspace{-0.4 cm} \frac{(2n-1)!!}{(n-1)!} \left(\frac{\lambda_{\sigma}}{2}\right)^{n-1} \prod_{j=1}^n \sigma_j.
\end{equation}
In particular, if $n=8$, the difference between \eqref{I8} and \eqref{pi8} comes from the number \[\lambda^*=1-\frac{1}{3}-\frac{1}{5}-\frac{1}{7}-\frac{1}{9}-\frac{1}{11}-\frac{1}{13}-\frac{1}{15}.\] Observe that $\lambda^*<0$, although the first addend $1$ has positive sign, and that there is an odd amount of minus signs in $\lambda^*$ and in $-\lambda^*$. So, in order to get the correct result for $I_8$, departing from the expression for $\pi$ in \eqref{pi8}, one has to quit the contribution of $\lambda^*$ and add the contribution of $-\lambda^*$. Consequently, \[I_8=\pi \left(1-(-1)^{7}\frac{(2 \cdot 8-1)!!}{7!} \left(\frac{\lambda^*}{2}\right)^{8-1}+(-1)^7\frac{(2 \cdot 8-1)!!}{7!} \left(-\frac{\lambda^*}{2}\right)^{8-1}\right).\] A tedious, but standard, sequence of operations produces \[I_8=\pi \left( 1- \frac{1}{2^6} \cdot \frac{15!!}{7!} \cdot \vert \lambda^* \vert^7 \right) =\pi \left( 1- \frac{6\,879\,714\,958\,723\,010\,531}{467\,807\,924\,720\,320\,453\,655\,260\,875\,000}\right).\] Of course, it would be possible to calculate the exact value for $I_9,I_{10},\dots$ introducing the pertinent corrections in the previous expression. These corrections would imply considering corrections relative to values $\lambda_{\sigma}$ such that $\lambda_{\sigma}<0$ despite having $\sigma_1=1$. However, this procedure would become longer as $n$ gets larger.

\section{Some generalizations}

In this section we explore some extensions of the classical Borwein result developed in the previous section. On the one hand, it is easy to guess that the specific sequence of values for the frequencies $1,1/3,1/5,1/7\dots$ is not relevant, but the facts \[\frac{1}{3}+\frac{1}{5}+\cdots+\frac{1}{13}<1\] and \[\frac{1}{3}+\frac{1}{5}+\cdots+\frac{1}{15}>1\] are the keys for explaining the break of the pattern. In other words, if each frequency is associated to a plus or minus sign, the important fact is that the sign for the first frequency determines the sign of the sum of all frequencies. On the other hand, and again from the point of view of plus and minus signs, we could ask: what happens if the sign for the sum of the frequencies is determined by the signs of the three first frequencies? This latter question is, to the best of our knowledge, unanswered and we use the previously developed techniques in order to provide a response to it.

\subsection{Arbitrary frequencies in Borwein integrals}

\begin{theorem} \label{th:generic_freq} Consider a non-increasing sequence of positive real numbers $a_j \in \mathbb{R}^+$ for every $j \in \mathbb{Z}^+$, such that exists $N \in \mathbb{N}_{\geq 2}$ with \[a_1 > \sum_{j=2}^N a_j, \textnormal{ but } a_1 < \sum_{j=2}^{N+1} a_j.\] Then, for any $n \leq N$, we have that \[I_n:= \int_{-\infty}^{\infty} \prod_{j=1}^n \sinc\left(a_j \, x\right) \, dx = \frac{\pi}{a_1}, \textnormal{ but } I_{N+1}:= \int_{-\infty}^{\infty} \prod_{j=1}^{N+1} \sinc\left(a_j \, x\right) \, dx \neq \frac{\pi}{a_1}.\]
\end{theorem}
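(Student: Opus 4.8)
The plan is to run the machinery of Section~2 with the numbers $1/(2j-1)$ replaced by the $a_j$, and then to pin down exactly which term is responsible for the break of the pattern at $n=N+1$. First I would expand $g(z)=\prod_{j=1}^{n}\sinc(a_j z)$, using $\sinc(a_j z)=\frac{e^{ia_j z}-e^{-ia_j z}}{2ia_j z}$, into $2^{n}$ summands of the form
\[
\left(\prod_{j=1}^{n}\sigma_j\right)\frac{1}{(2i)^{n}\,\prod_{j=1}^{n}a_j}\,\frac{e^{i\lambda_\sigma z}}{z^{n}},\qquad
\lambda_\sigma:=\sum_{j=1}^{n}\sigma_j a_j,\quad \sigma\in\{-1,1\}^{n},
\]
and set $g_1$ equal to the sum of those with $\lambda_\sigma>0$ and $g_2$ to the sum of those with $\lambda_\sigma<0$. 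For $n\le N$ no term has $\lambda_\sigma=0$, since $\bigl\lvert\sum_{j=2}^{n}\sigma_j a_j\bigr\rvert\le\sum_{j=2}^{N}a_j<a_1$; for $n=N+1$ a term with $\lambda_\sigma=0$ is of the form $c/z^{n}$ with $n\ge 3$, which decays on both semicircles and has zero residue at the origin, so it may be placed in $g_1$ harmlessly. Then $g$ is integrable on $\mathbb R$ (it decays like $x^{-n}$, with the usual conditional-convergence caveat at $n=1$) and a Jordan-type estimate gives the decay of $g_1$ on the upper semicircle and of $g_2$ on the lower one, so the decomposition is of the type required in subsection~\ref{Calculation}. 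Evaluating the residue as in subsection~\ref{Residue},
\[
I_n=2\pi i\,\Res(g_1,0)=\frac{\pi}{2^{\,n-1}(n-1)!\,\prod_{j=1}^{n}a_j}\sum_{\substack{\lambda_\sigma>0\\ \sigma\in\{-1,1\}^{n}}}\lambda_\sigma^{\,n-1}\prod_{j=1}^{n}\sigma_j.
\]

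For $n\le N$, the bound $a_1>\sum_{j=2}^{N}a_j\ge\sum_{j=2}^{n}a_j$ forces $\lambda_\sigma$ to have the same sign as $\sigma_1$, so the sum above runs exactly over $\{\sigma_1=1\}$; the multinomial expansion together with the symmetrisation identity \eqref{eq:symmetrization_equality}, carried out verbatim with $a_j$ in the place of $1/(2j-1)$, collapses it to $2^{n-1}(n-1)!\prod_{j=2}^{n}a_j$ and yields $I_n=\pi/a_1$. For $n=N+1$, put $S_+=\sum_{\lambda_\sigma>0}\lambda_\sigma^{N}\prod_j\sigma_j$ and $T=\sum_{\sigma_1=1}\lambda_\sigma^{N}\prod_j\sigma_j$; the same algebraic computation (which never used any sign condition, only that $\sigma_1=1$) gives $T=2^{N}N!\prod_{j=2}^{N+1}a_j$, so that $\frac{\pi}{2^{N}N!\,\prod_{j=1}^{N+1}a_j}\,T=\pi/a_1$. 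The involution $\sigma\mapsto-\sigma$ sends $\lambda_\sigma\mapsto-\lambda_\sigma$ and multiplies $\lambda_\sigma^{N}\prod_j\sigma_j$ by $(-1)^{N}(-1)^{N+1}=-1$; using it to identify $\{\sigma_1=-1,\ \lambda_\sigma>0\}$ with $A:=\{\sigma_1=1,\ \lambda_\sigma<0\}$ and cancelling the common block $\{\sigma_1=1,\ \lambda_\sigma>0\}$ (the $\lambda_\sigma=0$ terms contribute $0$ since $N\ge 2$) gives $S_+-T=-2\sum_{\sigma\in A}\lambda_\sigma^{N}\prod_j\sigma_j$, whence
\[
I_{N+1}=\frac{\pi}{a_1}-\frac{\pi}{2^{\,N-1}N!\,\prod_{j=1}^{N+1}a_j}\sum_{\sigma\in A}\lambda_\sigma^{N}\prod_{j=1}^{N+1}\sigma_j.
\]

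It remains to show that the last sum does not vanish, and this is where I expect the only genuine difficulty, since it is the single point at which the monotonicity of $(a_j)$ is really used. The claim is that $A=\{\sigma^*\}$ with $\sigma^*=(1,-1,\dots,-1)$: indeed, if $\sigma_1=1$ and $\sigma_k=1$ for some $k\ge 2$, then
\[
\lambda_\sigma\ \ge\ a_1+2a_k-\sum_{j=2}^{N+1}a_j\ >\ a_1+2a_k-(a_1+a_{N+1})\ =\ 2a_k-a_{N+1}\ \ge\ a_{N+1}\ >\ 0,
\]
using $a_1>\sum_{j=2}^{N}a_j$ for the strict step and $a_k\ge a_{N+1}$ for the last one, so $\sigma\notin A$; and $\sigma^*\in A$ because $\lambda_{\sigma^*}=a_1-\sum_{j=2}^{N+1}a_j=:\lambda^*<0$ by hypothesis. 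Hence $\sum_{\sigma\in A}\lambda_\sigma^{N}\prod_j\sigma_j=(\lambda^*)^{N}(-1)^{N}=\lvert\lambda^*\rvert^{N}>0$, and
\[
I_{N+1}=\frac{\pi}{a_1}-\frac{\pi\,\lvert\lambda^*\rvert^{\,N}}{2^{\,N-1}\,N!\,\prod_{j=1}^{N+1}a_j}\ \neq\ \frac{\pi}{a_1},
\]
which also recovers the value of $I_8$ found in the previous section when $a_j=1/(2j-1)$ and $N=7$. The main obstacle, then, is precisely this singleton lemma: without the monotonicity hypothesis several ``bad'' tuples could occur and their contributions could in principle cancel.
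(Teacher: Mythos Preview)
Your argument is correct and follows essentially the same route as the paper: expand the product into $2^n$ exponentials, compute the residue formula for $I_n$, use the multinomial/symmetrisation identity to get $\pi/a_1$ when the sum is over $\{\sigma_1=1\}$, and then identify the correction at $n=N+1$ coming from $\sigma^*=(1,-1,\dots,-1)$. In fact you are more careful than the paper in two places: the paper simply asserts that ``the difference between $I_{N+1}$ and $\pi/a_1$ comes from the number $\lambda^*$'' without justifying that $\sigma^*$ is the \emph{only} tuple with $\sigma_1=1$ and $\lambda_\sigma<0$, whereas you prove this singleton lemma explicitly (and correctly isolate it as the one step where monotonicity of $(a_j)$ is genuinely needed); and you also address the possibility $\lambda_\sigma=0$, which the paper does not mention.
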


\begin{proof}
The proof for the case $n \leq N$ is trivial, after taking into account the considerations in the previous Section. If we establish the change of variables $y=a_1\,x$, we have that \[I_n=\frac{1}{a_1}\int_{-\infty}^{\infty} \prod_{j=1}^n \sinc\left(\frac{a_j}{a_1} \, y\right) \, dy.\] Now that the first frequency equals $1$, we can apply the same argument of the previous Section, just replacing the roles of the particular values $1/(2j-1)$ with $a_j/a_1$ for every $j \geq 2$. Note that these specific values are not important. Before we had a cancellation of $(2n-1)!!$ on a numerator and on a denominator, and now we would cancel the factor $\prod_{j=1}^n a_1/a_j$. Consequently, \[I_n=\frac{1}{a_1}\int_{-\infty}^{\infty} \prod_{j=1}^n \sinc\left(\frac{a_j}{a_1} \, y\right) \, dy = \frac{\pi}{a_1},\] whenever $n \leq N$.

For the case $N+1$, the difference between $I_{N+1}$ and $\pi/a_1$ comes from the number \[\lambda^*=1-\frac{a_2}{a_1}-\frac{a_3}{a_1}-\ldots-\frac{a_{N+1}}{a_1}.\] Observe that $\lambda^*<0$, although the first addend $1$ has positive sign, and that there is an amount of $N$ minus signs in $\lambda^*$ and in $-\lambda^*$. So, in order to get the correct result for $I_{N+1}$, departing from $\pi/a_1$, one has to quit the contribution of $\lambda^*$ and add the contribution of $-\lambda^*$. Consequently, \[I_{N+1}=\frac{\pi}{a_1} \left(1-(-1)^N\frac{\prod_{j=1}^{N+1} (a_1/a_j)}{N!} \left(\frac{\lambda^*}{2}\right)^{N}+(-1)^1\frac{\prod_{j=1}^{N+1} (a_1/a_j)}{N!} \left(-\frac{\lambda^*}{2}\right)^{N}\right).\] From this expression, we deduce \[I_{N+1}=\frac{\pi}{a_1} \left( 1-\frac{1}{2^{N-1}} \cdot \frac{\prod_{j=1}^{N+1} (a_1/a_j)}{N!} \cdot \vert \lambda^* \vert^N \right).\] Since the number $a_1 \lambda^*$ is easier to be computed than $\lambda^*$, we also give the formula \[I_{N+1}=\pi \left( \frac{1}{a_1}-\frac{1}{2^{N-1}} \cdot \frac{1}{N! \prod_{j=1}^{N+1} a_j} \cdot \vert a_1\, \lambda^* \vert^N \right).\]
\end{proof}

\subsection{The case where the three first frequencies are dominant}

The complex analytic techniques developed in this article allow us to compute new Borwein integrals in cases where the sign of $\lambda_\sigma$ is not determined by $\sigma_1$, but it is determined by $\{ \sigma_1, \sigma_2, \sigma_3\}$. To our best knowledge, the computation of these integrals is new. The complex analytic approach presented in this paper allows to compute those integrals, whereas to our best knowledge the standard Fourier Transform approach does not allow such a treatment. 

\begin{theorem} \label{th:three_dominant} Let $n \in \mathbb N_{\geq 3}$ and consider a finite non-increasing sequence of positive real numbers $a_j \in \mathbb{R}^+$, where $1 \leq j \leq n$, such that
\begin{equation} \label{eq:condition}
a_2 + a_3 - a_1 > \sum_{k=4}^n a_k.
\end{equation}
Then, we have that 
\[I_n:= \int_{-\infty}^{\infty} \prod_{j=1}^n \sinc\left(a_j \, x\right) \, dx = 
\pi \cdot  \frac{- \sum_{k=1}^n a_k^2 - 2(a_1^2 + a_2^2 + a_3^2) + 6(a_1a_2 + a_2a_3 + a_1a_3)}{12 a_1 a_2 a_3}.\]
\end{theorem}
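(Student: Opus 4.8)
The plan is to run the residue machinery of Section~2 with the frequencies $a_1 \ge a_2 \ge \cdots \ge a_n > 0$ in place of the $1/(2j-1)$. Expanding $\sinc(a_j z) = (e^{ia_jz}-e^{-ia_jz})/(2ia_jz)$ and multiplying out the $n$ factors of $g(z) = \prod_{j=1}^n \sinc(a_j z)$ produces $2^n$ terms $\pm\,\frac{1}{(2i)^n \prod_j a_j}\,\frac{e^{\lambda_\sigma i z}}{z^n}$, where $\lambda_\sigma = \sum_{j=1}^n \sigma_j a_j$, $\sigma \in \{-1,1\}^n$, and the sign is $\prod_j \sigma_j$; since $n \ge 3$ the restriction of $g$ to $\mathbb{R}$ is integrable, and grouping the terms with $\lambda_\sigma > 0$ into $g_1$ and those with $\lambda_\sigma < 0$ into $g_2$ puts us in the situation of Subsection~\ref{Calculation}. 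Reproducing the residue computation of Subsection~\ref{Residue} (equivalently, rescaling as in the proof of Theorem~\ref{th:generic_freq}) gives the master identity
\begin{equation*}
I_n \;=\; \frac{\pi}{2^{\,n-1}\,(n-1)!\,\prod_{j=1}^n a_j}\;\sum_{\substack{\sigma \in \{-1,1\}^n\\ \lambda_\sigma > 0}} \Bigl(\prod_{j=1}^n \sigma_j\Bigr)\,\lambda_\sigma^{\,n-1},
\end{equation*}
so the whole theorem reduces to (a) describing the index set $\{\lambda_\sigma > 0\}$ and (b) evaluating this sum.

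For (a), I would write $\lambda_\sigma = (\sigma_1 a_1 + \sigma_2 a_2 + \sigma_3 a_3) + \sum_{k=4}^n \sigma_k a_k$ and note $\bigl|\sum_{k=4}^n \sigma_k a_k\bigr| \le \sum_{k=4}^n a_k$. Among the eight sign patterns of $(\sigma_1,\sigma_2,\sigma_3)$, the four with a majority of $+1$'s all satisfy $\sigma_1 a_1 + \sigma_2 a_2 + \sigma_3 a_3 \ge -a_1 + a_2 + a_3$ (this is where the ordering $a_1 \ge a_2 \ge a_3$ enters), and hence, by \eqref{eq:condition}, $\lambda_\sigma \ge (a_2 + a_3 - a_1) - \sum_{k=4}^n a_k > 0$; applying $\sigma \mapsto -\sigma$ disposes of the other four. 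So $\{\lambda_\sigma > 0\}$ is exactly the set of $\sigma$ whose first three coordinates are a majority of $+1$'s (and, incidentally, no $\lambda_\sigma$ vanishes, so Subsection~\ref{Calculation} genuinely applies). The indicator of this event is the polynomial $\tfrac14\bigl(2 + \sigma_1 + \sigma_2 + \sigma_3 - \sigma_1\sigma_2\sigma_3\bigr)$, as one checks on the eight cases.

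For (b), I would substitute that indicator into the master identity, so that the inner sum becomes $\tfrac14\bigl(2T_0 + T_1 + T_2 + T_3 - T_{123}\bigr)$, where $T_0 = \sum_\sigma \bigl(\prod_j \sigma_j\bigr)\lambda_\sigma^{n-1}$, $T_k = \sum_\sigma \bigl(\prod_{j\ne k}\sigma_j\bigr)\lambda_\sigma^{n-1}$ for $k \in \{1,2,3\}$, and $T_{123} = \sum_\sigma \bigl(\prod_{j\ge 4}\sigma_j\bigr)\lambda_\sigma^{n-1}$. Each of these is evaluated by the multinomial-plus-parity argument already used for $\mathcal{S}$ in Subsection~\ref{Explicit}: expand $\lambda_\sigma^{n-1}$ by the multinomial theorem, factor the sum over $\sigma$, and keep only the multidegrees $p$ with $|p| = n-1$ whose forced coordinate parities are compatible with the total $n-1$. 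This kills $T_0$ (all $p_j$ would have to be odd, forcing $|p| \ge n$), gives $T_k = 2^n (n-1)!\,\prod_{j\ne k} a_j$ (the unique surviving multidegree is $p_k = 0$ and $p_j = 1$ otherwise), and gives $T_{123} = 2^n (n-1)!\,\bigl(\prod_{j\ge 4} a_j\bigr)\bigl(\tfrac12\sum_{j=1}^3 a_j^2 + \tfrac16 \sum_{j=4}^n a_j^2\bigr)$. Plugging these back, using $\prod_{j\ne k} a_j = \prod_j a_j / a_k$ and $\prod_{j \ge 4} a_j = \prod_j a_j / (a_1 a_2 a_3)$, the factor $\prod_j a_j$ cancels and, putting everything over $12\,a_1 a_2 a_3$ and rewriting $-3(a_1^2+a_2^2+a_3^2)-\sum_{j\ge4}a_j^2 = -\sum_{k=1}^n a_k^2 - 2(a_1^2+a_2^2+a_3^2)$, one obtains exactly the stated closed form.

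The hard part is the bookkeeping rather than any new idea: the eight-case verification in (a) must invoke the ordering of the $a_j$ in each case, and in evaluating $T_{123}$ one must correctly enumerate the surviving multidegrees — the ``excess'' $2$ in $|p| = n-1$ can sit on one of the first three coordinates (raising it to $2$, with a multinomial weight carrying a $1/2!$) or on one of the later coordinates (raising it to $3$, with a weight carrying a $1/3!$), and these two families have to be weighted and summed correctly to produce the $\tfrac12$ and $\tfrac16$ above. Everything else is the residue-and-symmetrization template of Section~2.
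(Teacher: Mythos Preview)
Your proof is correct and follows the same residue-plus-multinomial template as the paper, but with a different decomposition in step~(b). The paper writes the indicator of ``majority of $+1$'s among $\sigma_1,\sigma_2,\sigma_3$'' via inclusion--exclusion on the events $\{\sigma_i=\sigma_j=1\}$, obtaining $\mathcal S = \mathcal S_{12}+\mathcal S_{13}+\mathcal S_{23}-2\mathcal S_{123}$, where each $\mathcal S_\bullet$ is a sum over a \emph{restricted} set of $\sigma$'s with some coordinates fixed to $+1$. You instead expand the same indicator as the polynomial $\tfrac14(2+\sigma_1+\sigma_2+\sigma_3-\sigma_1\sigma_2\sigma_3)$ and sum each monomial over the \emph{full} cube $\{-1,1\}^n$, producing the pieces $T_0,T_1,T_2,T_3,T_{123}$. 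The intermediate quantities genuinely differ: for instance your $T_{123}$ forces $p_1,p_2,p_3$ even (so only the pure-square terms $a_m^2$ survive for $m\le 3$), whereas the paper's $\mathcal S_{123}$ leaves $p_1,p_2,p_3$ unconstrained and hence also picks up the cross terms $a_ia_j$; these discrepancies cancel against the different pairwise pieces, as they must. Your indicator-polynomial route is arguably cleaner here---the $T_k$ are computed uniformly and $T_0$ vanishes for free---and generalizes more mechanically (e.g.\ to a ``first $2m{+}1$ frequencies dominant'' setting one would write the majority indicator as a symmetric polynomial in $\sigma_1,\ldots,\sigma_{2m+1}$); the paper's inclusion--exclusion keeps the sums closer in form to the single-dominant case of Section~\ref{Explicit}.
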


Note that the condition of the theorem implies $-a_1 + a_2 + a_3 > 0$, so, in particular, we know that we are not in the hypothesis of the first theorem. 

\begin{proof} We use the same notation as before for $\lambda_\sigma = \sum_{j=1}^n \sigma_j \lambda_j$. We know from the previous section that
\begin{equation} \label{eq:malaysia}
I_n = \frac{\pi}{2^{n-1}} \frac{1}{(n-1)!}  \frac{1}{\prod_{j=1}^n a_j} 
\underbrace{ \sum_{ \substack{ \lambda_\sigma > 0 \\ \sigma \in \{-1, 1\}^n } } \lambda_\sigma^{n-1} \prod_{j=1}^n \sigma_j }_{\mathcal S}.
\end{equation}

First, let us show that
\begin{equation} \label{eq:signs}
\text{sign}(\lambda_\sigma) = \text{sign} \left( \sigma_1 + \sigma_2 + \sigma_3 \right),
\end{equation} 
that is, $\lambda_\sigma > 0$ if and only if at least two of the first three signs are positive. Indeed, the condition \eqref{eq:condition} gives us that if $\sigma_1 = -1$, $\sigma_2 = \sigma_3 = +1$, then $\lambda_\sigma > 0$. The fact that $a_j$ is positive and non-increasing implies
\begin{equation*}
a_1 + a_2 + a_3, a_1 - a_2 + a_3, a_1 + a_2 - a_3 \geq -a_1 + a_2 + a_3
\end{equation*}
so any other arrangement with at least two positive signs among $\sigma_1, \sigma_2, \sigma_3$ yields $\lambda_\sigma > 0$ as well. An analogous argument gives that if at least two signs among $\sigma_1, \sigma_2, \sigma_3$ are negative, then $\lambda_\sigma < 0$. We conclude \eqref{eq:signs}.

Using \eqref{eq:signs} and the inclusion-exclusion principle, we can decompose $\mathcal S$ in \eqref{eq:malaysia} as
\begin{align}
\mathcal S &= \mathcal S_{\mathrm{12}} + \mathcal S_{\mathrm{13}} + \mathcal S_{\mathrm{23}} - 2\mathcal S_{\mathrm{123}}, \qquad \mbox{ where }  \label{eq:Sdec} \\
\mathcal S_{\mathrm{ij}} &=  \sum_{ \substack{ \sigma_i, \sigma_j = 1 \\ \sigma \in \{-1, 1\}^n } } \lambda_\sigma^{n-1} \prod_{j=1}^n \sigma_j \qquad \mbox{ and } \qquad 
\mathcal S_{\mathrm{123}} =  \sum_{ \substack{ \sigma_1=\sigma_2=\sigma_3 = 1 \\ \sigma \in \{-1, 1\}^n } } \lambda_\sigma^{n-1} \prod_{j=1}^n \sigma_j. \notag
\end{align}

The computations of $\mathcal S_{\mathrm{ij}}$ and $\mathcal S_{\mathrm{123}}$ is similar to the one done in Section \ref{Explicit}, so we will just focus on the parts that are different. We start computing $\mathcal S_{\mathrm{12}}$. Recall that $\mathcal P$ is the family formed by tuples $p = (p_1, p_2, \ldots , p_n)$ with $p_i \in \mathbb N_{\geq 0}$ and $p_1 + p_2 + \ldots + p_n = n-1$. Using the multinomial formula for $\lambda_\sigma^{n-1} = \left( \sum_{j=1}^n \sigma_j a_j \right)^{n-1}$, we have
\begin{align}  
\mathcal S_{\mathrm{12}} &= \sum_{p \in \mathcal P} \binom{n-1}{p} \prod_{j=1}^n a_j^{p_j} \sum_{ \substack{ \sigma_1 = \sigma_2 = 1 \\ \sigma \in \{-1, 1\}^n } }  \prod_{j=1}^n \sigma_j^{p_j+1} \notag\\
&= \sum_{p \in \mathcal P} \binom{n-1}{p} \prod_{j=1}^n a_j^{p_j} \prod_{j=3}^n \left( (-1)^{p_j+1} + 1^{p_j+1} \right)
\label{eq:indonesia}
\end{align}
Note that the last sum is zero unless $p_3, p_4, \ldots p_n$ are all odd. Given that $p_3 + p_4 + \ldots p_n \leq n-1$, they must be all $1$. Thus, the only possible values of $p \in \mathcal P$ that yield a non-zero contribution in the above sum are $p = (0, 1, 1, 1, 1, \ldots , 1)$ and $p = (1, 0, 1, 1, 1, \ldots , 1)$. In those cases, $\prod_{j=1}^n \sigma_j^{p_j+1} = 1$. Thus, using this observation in \eqref{eq:indonesia}, we have
\begin{equation*}
\mathcal S_{\mathrm{12}} =  (n-1)!  (a_1 + a_2) \prod_{j=3}^n a_j \cdot 2^{n-2}  = 2^{n-1} (n-1)! \prod_{j=1}^n a_j \cdot \frac{1}{2} \left( \frac{1}{a_1} + \frac{1}{a_2} \right).
\end{equation*}

Reasoning in an analogous way, one can conclude that 
\begin{equation*}
\mathcal S_{\mathrm{13}} = 2^{n-1} (n-1)! \prod_{j=1}^n a_j \cdot \frac{1}{2} \left( \frac{1}{a_1} + \frac{1}{a_3} \right), \quad \mbox{ and } \quad 
\mathcal S_{\mathrm{23}} = 2^{n-1} (n-1)! \prod_{j=1}^n a_j \cdot \frac{1}{2} \left( \frac{1}{a_2} + \frac{1}{a_3} \right).
\end{equation*}
Thus, we obtain
\begin{equation} \label{eq:Sother}
\mathcal S_{\mathrm{12}} + \mathcal S_{\mathrm{13}} + \mathcal S_{\mathrm{23}}
= 2^{n-1} (n-1)! \prod_{j=1}^n a_j \cdot \left( \frac{1}{a_1} + \frac{1}{a_2} + \frac{1}{a_3} \right)
\end{equation}

Similarly to \eqref{eq:indonesia}, the multinomial formula of $\lambda_\sigma^{n-1}$ on $\mathcal S_{\mathrm{123}}$ yields
\begin{equation} \label{eq:timorleste}
\mathcal S_{\mathrm{123}} = \sum_{p \in \mathcal P} \binom{n-1}{p} \prod_{j=1}^n a_j^{p_j} \prod_{j=4}^n \left( (-1)^{p_j+1}+ 1^{p_j+1} \right) .
\end{equation}
The latter sum is zero unless all $p_j$ are odd for $j > 4$. Thus, $p_j \geq 1$ for $j \geq 1$. We have $p_1 + p_2 + p_3 + \sum_{j=4}^n (p_j - 1) = 2$, where all the summands are non-negative integers and moreover the terms $(p_j-1)$ for $j \geq 4$ are even. Thus, there are two disjoint possibilities on which the contribution from \eqref{eq:timorleste} is non-zero:
\begin{itemize}
\item $p_k = 3$ for some $k \geq 4$. Then, $p_1 = p_2 = p_3 = 0$ and $p_j = 1$ for all $j \geq 4$, $j \neq k$. We denote such set of tuples $p$ by $\mathcal P_A$
\item All $p_j = 1$ for $j \geq 4$. Then, $p_1 + p_2 + p_3 = 2$. We denote such set of tuples $p$ by $\mathcal P_B$.
\end{itemize}
We divide the sum on \eqref{eq:timorleste} as $\mathcal S_{\mathrm{123}} = \mathcal S_A + \mathcal S_B$, where $\mathcal S_A$ corresponds to the terms of the sum with $p \in \mathcal P_A$ and $\mathcal S_B$ to the terms with $p \in \mathcal P_B$. We have that
\begin{equation*}
\mathcal S_A = \sum_{k=4}^n \frac{(n-1)!}{3!} \prod_{j=4}^n a_j \cdot a_k^2 \cdot 2^{n-3}
= 2^{n-1} (n-1)! \prod_{j=1}^n a_j \cdot \frac{\sum_{k=4}^n a_k^2 }{24 a_1 a_2 a_3},
\end{equation*}
and 
\begin{align*}
\mathcal S_B &= \sum_{p_1 + p_2 + p_3 = 2} \frac{(n-1)!}{p_1!p_2!p_3!} \prod_{j=1}^n a_j \cdot a_1^{p_1-1} a_2^{p_2-1} a_3^{p_3-1}  \cdot 2^{n-3} \\
&=
2^{n-1} (n-1)! \prod_{j=1}^n a_j \left( \frac{1}{8} \cdot \frac{a_1^2+a_2^2+a_3^2}{a_1a_2a_3} + \frac{1}{4} \cdot \frac{a_1a_2 + a_1a_3 + a_2a_3}{a_1a_2a_3}\right) \\
&=
2^{n-1} (n-1)! \prod_{j=1}^n a_j \cdot \frac{(a_1+a_2+a_3)^2}{8a_1a_2a_3}
\end{align*}
Therefore, we get
\begin{align} 
\mathcal S_{\mathrm{123}} &= 2^{n-1} (n-1)! \prod_{j=1}^n a_j \left( 
\frac{\sum_{k=4}^n a_k^2 }{24 a_1 a_2 a_3} +  \frac{(a_1+a_2+a_3)^2}{8a_1a_2a_3} \right)
\label{eq:S123}
\end{align}

Now, substituting \eqref{eq:Sother} and \eqref{eq:S123} on \eqref{eq:Sdec}, we get
\begin{align*}
\mathcal S &= \mathcal S_{\mathrm{12}} + \mathcal S_{\mathrm{13}} + \mathcal S_{\mathrm{23}} - 2 \mathcal S_{\mathrm{123}}  \\
&=
 2^{n-1} (n-1)! \prod_{j=1}^n a_j\cdot\frac{- \sum_{k=1}^n a_k^2 - 2(a_1^2 + a_2^2 + a_3^2) + 6(a_1a_2 + a_2a_3 + a_1a_3)}{12a_1a_2a_3} .
\end{align*}

Substituting this into \eqref{eq:malaysia}, we get
\begin{equation}
I = \pi \cdot  \frac{- \sum_{k=1}^n a_k^2 - 2(a_1^2 + a_2^2 + a_3^2) + 6(a_1a_2 + a_2a_3 + a_1a_3)}{12 a_1 a_2 a_3}.
\end{equation}

\end{proof}

From the previous theorem, we can derive the following straightforward remarks and examples.

\begin{remark} In the case $a_1 = a_2$, the expression simplifies and we get 
\begin{equation}
I_n = \pi \cdot \left(\frac{1}{a_1} - \frac{a_3}{4a_1^2} - \frac{1}{12 a_3 a_1^2} \sum_{k=4}^n a_k^2 \right).
\end{equation}
\end{remark}

\begin{remark} The case $n=3$ has been used as an example in the literature due to the lack of closed general formulas. In the case $n=3$, our result simplifies to \[I_3=\frac{\pi}{a_1 a_2 a_3} \left(  \frac{2(a_1 a_2+ a_2 a_3+ a_3 a_1)-(a_1^2+a_2^2+a_3^2)}{4}\right),\] which agrees with the previous literature (for example, Equation (4.5) in \cite{Aaronson2019}).
\end{remark}

\begin{remark} It is also possible to consider the limit case of Theorem \ref{th:three_dominant} when $n \to \infty$ due to the dominated convergence theorem, since $\vert\sinc(a_j x)\vert \leq 1$.
\end{remark}

Contrary to the case when $a_1$ is dominant, the formula when the three biggest frequencies dominate involves all the coefficients $a_j$, concretely, its $\ell^2$ norm. 

\begin{example} Consider a finite non-increasing sequence of positive real numbers $a_j \in \mathbb{R}^+$ for $1 \leq j \leq n$, where $a_1=a_2=a_3=1$. If we assume that $\sum_{j=4}^n a_j < 1$, then we have that
\begin{equation*}
\int \prod_{j=1}^n \sinc (a_j x) dx = \pi \left( 1 - \frac{1}{12} \| a \|_{\ell^2}^2 \right)= \pi \left( 1 - \frac{1}{12} \sum_{j=1}^{n} a_j^2 \right).
\end{equation*}
\end{example}

\begin{example} Let us consider the sequence $a_j = \frac{1}{j!}$ for $j \geq 0$. It is well known that $\sum_{j=0}^\infty a_j = e$, so we have that $a_1 + a_2 - a_0 > \sum_{j=3}^\infty a_j$ since $1/2 > e-5/2$. Thus, we obtain the identity
\begin{equation*}
\int_{-\infty}^\infty \prod_{j=0}^\infty \sinc \left( \frac{x}{j!} \right) dx =\pi \left( \frac{5}{4} - \frac{1}{6}\| a \|_{\ell^2}^2\right) =\pi \left( \frac{5}{4} - \frac{1}{6} \sum_{j=0}^\infty \frac{1}{j!^2} \right).
\end{equation*}
\end{example}

\section*{Declarations}

\begin{itemize}
\item Availability of data and materials: Not applicable.
\item Competing interests: The authors declare that they do not have competing interests.
\item Funding: Funding for Daniel Cao Labora was provided by Xunta de Galicia (Grant No. ED431C 2019/02), Spanish National Plan for Scientific and Technical Research and Innovation (Grant No. MTM2016-75140-P).
\item Authors contributions: Both authors discussed together the ideas of the paper. Besides, both authors wrote and reviewed the manuscript together.
\item Acknowledgments: Not applicable.
\item Authors information:\\

Daniel Cao Labora \quad daniel.cao@usc.es \quad ORCID: 0000-0003-2266-2075\\
Departament of Statistics, Mathematical Analysis and Optimization, Faculty of Mathematics and CITMAga, Universidade de Santiago de Compostela (USC), Galicia, Spain;\\

Gonzalo Cao Labora \quad gcaol@mit.edu \quad ORCID: 0000-0002-8426-8391\\
Department of Mathematics, Massachusetts Institute of Technology (MIT), MA, United States of America\\
\end{itemize}

\bibliography{ComplexBorwein}{}
\bibliographystyle{plain}
\end{document}